\newtheorem{thm}{Theorem}
\newtheorem{lemma}{Lemma}
\newtheorem{pro}{Proposition}
\newtheorem{cor}{Corollary}
\numberwithin{equation}{section} \setcounter{tocdepth}{1}
\def\r{\rho}
\def\C{\mathbb C}
\def\C{\mathbb{C}}
\begin{document}

\title[$p$-adic dynamical systems of a $(1,2)$-rational function]{$p$-adic dynamical systems of the function $\dfrac{ax}{x^2+a}$}

\author{U.A. Rozikov, I.A. Sattarov, S. Yam}

 \address{U.\ A.\ Rozikov \\ Institute of mathematics,
81, Mirzo Ulug'bek str., 100125, Tashkent, Uzbekistan.} \email
{rozikovu@yandex.ru}

 \address{I.\ A.\ Sattarov \\ Institute of mathematics,
81, Mirzo Ulug'bek str., 100125, Tashkent, Uzbekistan.} \email
{sattarovi-a@yandex.ru}

\address{S.\ Yam \\ California State University, Monterey Bay, 100 Campus Center, Seaside, California, 93955 USA} \email
{syam@csumb.edu}

\begin{abstract} We show that any $(1,2)$-rational function with a unique
fixed point is topologically conjugate to a $(2,2)$-rational function
or to the function $f(x)={ax\over x^2+a}$. The case $(2,2)$ was studied in our previous paper,
here we study  the dynamical systems generated by the function $f$ on the set of complex
$p$-adic field $\C_p$. We show that the unique fixed point is indifferent and therefore
the convergence of the trajectories is not the typical
case for the dynamical systems. We construct the corresponding Siegel disk of these dynamical systems.
 We determine a sufficiently small set containing the set of limit points.
It is given all possible invariant spheres.
 We show that the $p$-adic dynamical system reduced on each invariant sphere is not ergodic with respect to Haar measure on the set of $p$-adic numbers $Q_p$.
 Moreover some periodic orbits of the system are investigated.
\end{abstract}

\keywords{Rational dynamical systems; fixed point; invariant set; Siegel disk;
complex $p$-adic field; ergodic.} \subjclass[2010]{46S10, 12J12, 11S99,
30D05, 54H20.} \maketitle

\section{Introduction}

We study $p$-adic dynamical systems generated by a rational
function. For motivation of such investigations see \cite{ARS}-\cite{S} and references therein.
The paper is organized as follows: First we give some necessary definitions and facts.
Then in  Section 2 show that any $(1,2)$-rational function with a unique
fixed point is topologically conjugate to a $(2,2)$-rational function
or to the function $f(x)={ax\over x^2+a}$. In \cite{RS2} the case of $(2,2)$-rational function
with a unique fixed point is studied.  In this paper for $f$ we show that the unique fixed point is indifferent.
 We give a Siegel disk of the dynamical system. We give a localization
 of the set of limit points.
Section 3 contains a description of all invariant spheres with respect to $f$. We study ergodicity properties of the
dynamical system reduced on each invariant sphere with respect to Haar measure and
 show that the $p$-adic dynamical system reduced on each invariant sphere is not ergodic.
In Section 4 we find 2-periodic orbit $\{t_1, t_2\}$ and show that it can only be either an attracting or an indifferent. We
shall prove that if the cycle is attracting then it attracts each trajectory which starts from an element of a open ball of radius
$h=|t_1 - t_2|_p$ centered at $t_1$ or at $t_2$. If the 2-periodic cycle is an indifferent one then every iteration maps either of the two
aforementioned balls to another one. All other spheres of radius $> h$ and center $t_1$ and $t_2$ are invariant independently of the
attractiveness of the cycle.

\subsection{$p$-adic numbers}

Let $\mathbb{Q}$ be the field of rational numbers. The greatest common
divisor of the positive integers $n$ and $m$ is denotes by
$(n,m)$. Every rational number $x\neq 0$ can be represented in the
form $x=p^r\frac{n}{m}$, where $r,n\in\mathbb{Z}$, $m$ is a
positive integer, $(p,n)=1$, $(p,m)=1$ and $p$ is a fixed prime
number.

The $p$-adic norm of $x$ is given by
$$
|x|_p=\left\{
\begin{array}{ll}
p^{-r}, & \ \textrm{ for $x\neq 0$},\\[2mm]
0, &\ \textrm{ for $x=0$}.\\
\end{array}
\right.
$$
It has the following properties:

1) $|x|_p\geq 0$ and $|x|_p=0$ if and only if $x=0$,

2) $|xy|_p=|x|_p|y|_p$,

3) the strong triangle inequality
$$
|x+y|_p\leq\max\{|x|_p,|y|_p\},
$$

3.1) if $|x|_p\neq |y|_p$ then $|x+y|_p=\max\{|x|_p,|y|_p\}$,

3.2) if $|x|_p=|y|_p$ then $|x+y|_p\leq |x|_p$,

this is a non-Archimedean one.

The completion of $\mathbb{Q}$ with  respect to $p$-adic norm defines the
$p$-adic field which is denoted by $\mathbb{Q}_p$ (see \cite{Ko}).

The algebraic completion of $\mathbb{Q}_p$ is denoted by $\C_p$ and it is
called {\it complex $p$-adic numbers}.  For any $a\in\C_p$ and
$r>0$ denote
$$
U_r(a)=\{x\in\C_p : |x-a|_p<r\},\ \ V_r(a)=\{x\in\C_p :
|x-a|_p\leq r\},
$$
$$
S_r(a)=\{x\in\C_p : |x-a|_p= r\}.
$$

A function $f:U_r(a)\to\C_p$ is said to be {\it analytic} if it
can be represented by
$$
f(x)=\sum_{n=0}^{\infty}f_n(x-a)^n, \ \ \ f_n\in \C_p,
$$ which converges uniformly on the ball $U_r(a)$.

\subsection{Dynamical systems in $\C_p$}

Recall some known facts concerning dynamical
systems $(f,U)$ in $\C_p$, where $f: x\in U\to f(x)\in U$ is an
analytic function and $U=U_r(a)$ or $\C_p$ (see for example \cite{PJS}).

Now let $f:U\to U$ be an analytic function. Denote
$f^n(x)=\underbrace{f\circ\dots\circ f}_n(x)$.

If $f(x_0)=x_0$ then $x_0$
is called a {\it fixed point}. The set of all fixed points of $f$
is denoted by Fix$(f)$. A fixed point $x_0$ is called an {\it
attractor} if there exists a neighborhood $U(x_0)$ of $x_0$ such
that for all points $x\in U(x_0)$ it holds
$\lim\limits_{n\to\infty}f^n(x)=x_0$. If $x_0$ is an attractor
then its {\it basin of attraction} is
$$
A(x_0)=\{x\in \C_p :\ f^n(x)\to x_0, \ n\to\infty\}.
$$
A fixed point $x_0$ is called {\it repeller} if there  exists a
neighborhood $U(x_0)$ of $x_0$ such that $|f(x)-x_0|_p>|x-x_0|_p$
for $x\in U(x_0)$, $x\neq x_0$.

Let $x_0$ be a fixed point of a
function $f(x)$.
Put $\lambda=f'(x_0)$. The point $x_0$ is attractive if $0<|\lambda|_p < 1$, {\it indifferent} if $|\lambda|_p = 1$,
and repelling if $|\lambda|_p > 1$.

The ball $U_r(x_0)$ (contained in $V$) is said to
be a {\it Siegel disk} if each sphere $S_{\r}(x_0)$, $\r<r$ is an
invariant sphere of $f(x)$, i.e. if $x\in S_{\r}(x_0)$ then all
iterated points $f^n(x)\in S_{\r}(x_0)$ for all $n=1,2\dots$.  The
union of all Siegel desks with the center at $x_0$ is said to {\it
a maximum Siegel disk} and is denoted by $SI(x_0)$.

Let $f:U\rightarrow U$ and $g:V\rightarrow V$ be two maps. $f$ and $g$ are said
to be {\it topologically conjugate} if there exists a homeomorphism $h: U \rightarrow V$ such
that, $h \circ f = g \circ h$. The homeomorphism $h$ is called a {\it topological conjugacy}.
Mappings that are topologically conjugate are completely equivalent in
terms of their dynamics. For example, if $f$ is topologically conjugate to $g$ via
$h$, and $x_0$ is a fixed point for $f$, then $h(x_0)$ is fixed for $g$. Indeed, $h(x_0) = hf(x_0) =
gh(x_0)$.

\section{$(1,2)$-Rational $p$-adic dynamical systems}

In this paper we consider the dynamical system associated with the
$(1,2)$-rational function $f:\C_p\to\C_p$ defined by
\begin{equation}\label{fa}
f(x)=\frac{ax+b}{x^2+cx+d}, \ \ a\neq 0, \ \  a,b,c,d\in \C_p.
\end{equation}
where  $x\neq x_{1,2}=\frac{-c\pm\sqrt{c^2-4d}}{2}$.

We can see that for $(1,2)$-rational function (\ref{fa}) the equation
$f(x)=x$ for fixed points is equivalent to  the equation
\begin{equation}\label{ce}
x^3+cx^2+(d-a)x-b=0.
\end{equation}
Since $\C_p$ is algebraic closed the equation (\ref{ce}) may have
three solutions with one of the following:

(i). One solution having multiplicity three;

(ii). Two solutions, one of which has multiplicity two;

(iii). Three distinct solutions.

In this paper we investigate the behavior of trajectories of an
arbitrary $(1,2)$-rational dynamical system in complex $p$-adic
filed $\C_p$ when there is unique fixed point for $f$, i.e., we
consider the case (i).

The following lemma gives a criterion on parameters of
the function (\ref{fa}) guaranteeing the uniqueness of its fixed point.

\begin{lemma}\label{tp} The function (\ref{fa}) has unique fixed point if and only if
\begin{equation}\label{fc}
   {{-c}\over 3}=-\sqrt{{{d-a}\over 3}}=\sqrt[3]{b} \ \ or \ \ {{-c}\over 3}=\sqrt{{{d-a}\over 3}}=\sqrt[3]{b}.
\end{equation}
\end{lemma}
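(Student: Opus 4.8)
The plan is to exploit the fact that, since $\C_p$ is algebraically closed, the monic cubic on the left of (\ref{ce}) is completely determined by its roots counted with multiplicity; hence $f$ has a unique fixed point precisely in case (i), i.e.\ precisely when the polynomial $x^3+cx^2+(d-a)x-b$ is a perfect cube $(x-x_0)^3$ for some $x_0\in\C_p$. Thus the whole statement reduces to deciding, in terms of $a,b,c,d$, when this polynomial is a perfect cube.

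First I would prove the easy direction. Assuming (\ref{fc}), set $x_0=-c/3$; the chain of equalities in (\ref{fc}) says exactly that $x_0^2=(d-a)/3$ and $x_0^3=b$ (the two alternatives in (\ref{fc}) only record the two possible signs of the square root, so that $(-c/3)^2=(d-a)/3$ in either case). Substituting these into the expansion
$$(x-x_0)^3=x^3-3x_0x^2+3x_0^2x-x_0^3$$
and using $-3x_0=c$, $3x_0^2=d-a$, $-x_0^3=-b$ shows that (\ref{ce}) coincides with $(x-x_0)^3=0$, so $x_0$ is the only fixed point.

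For the converse I would argue by matching coefficients. If $f$ has a unique fixed point $x_0$, then by the remark above $x^3+cx^2+(d-a)x-b=(x-x_0)^3$. Comparing the coefficients of $x^2$, $x^1$ and $x^0$ gives the three relations $c=-3x_0$, $d-a=3x_0^2$ and $b=x_0^3$. Solving the first for $x_0=-c/3$ and inserting it into the remaining two yields $(-c/3)^2=(d-a)/3$ and $(-c/3)^3=b$; rewriting these as $-c/3=\pm\sqrt{(d-a)/3}$ together with $-c/3=\sqrt[3]{b}$ gives exactly the two cases of (\ref{fc}).

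The computations are routine, so I do not expect a real obstacle; the only point requiring care is the interpretation of the multivalued radicals $\sqrt{\cdot}$ and $\sqrt[3]{\cdot}$ over $\C_p$. One must read (\ref{fc}) as the assertion that the single element $-c/3$ is simultaneously a square root of $(d-a)/3$ and a cube root of $b$, and check that the disjunction over the two square-root branches is exactly what is needed for the equivalence to hold for all admissible parameter values.
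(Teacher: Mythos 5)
Your proof is correct and follows essentially the same route as the paper: both directions reduce to writing $x^3+cx^2+(d-a)x-b$ as $(x-x_0)^3$ and matching coefficients (the paper's sufficiency step merely rewrites $f$ in the form (\ref{fd}) before checking that $f(x)=x$ becomes $(x+{c\over 3})^3=0$, which is the same computation as your substitution into the expansion of $(x-x_0)^3$). Your closing remark on reading the multivalued radicals, with the disjunction recording the two square-root branches, matches the paper's $\pm\sqrt{(d-a)/3}$ and is handled at the same level of rigor.
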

\begin{proof} {\it Necessariness.} Assume (\ref{fa}) has a unique fixed point, say $x_0$.
Then the LHS of equation (\ref{ce}) (which is equivalent to $f(x)=x$) can be written as
$$x^3+cx^2+(d-a)x-b=(x-x_0)^3.$$
Consequently,
$$\left\{\begin{array}{lll}
3x_0=-c \\[2mm]
3x_0^2=d-a \\[2mm]
x_0^3=b
\end{array}
\right.,$$ which gives
$$x_0={{-c}\over 3}=\pm\sqrt{{{d-a}\over 3}}=\sqrt[3]{b}.$$

{\it Sufficiency.} Assume the coefficients of (\ref{fa})
satisfy (\ref{fc}). Then it can be written as
\begin{equation}\label{fd}
f(x)=\frac{ax-{{c^3}\over {27}}}{x^2+cx+{c^2\over 3}+a},
\ \ a\neq0, \ \  a,c\in \C_p.
\end{equation}
In this case the equation $f(x)=x$ can be written as
$$(x+{{c}\over 3})^3=0.$$
Thus $f(x)$ has unique fixed point $x_0=-{{c}\over 3}$.
\end{proof}

It follows from this lemma that if the function (\ref{fa}) has unique
fixed point then it has the form (\ref{fd}).
Thus we study the dynamical system $(f,\C_p)$ with $f$ given by (\ref{fd}).

Let homeomorphism $h: \C_p \rightarrow \C_p$ is defined by $x=h(t)=t+x_0$. So $h^{-1}(x)=x-x_0$.
Note that, the function $f$ is topologically conjugate to function $h^{-1}\circ f\circ h$.
We have
\begin{equation}\label{fb}
(h^{-1}\circ f\circ h)(t)=\frac{{c\over 3}t^2+(a+{{c^2}\over 9})t}{t^2+{c\over 3}t+a+{{c^2}\over 9}}.
\end{equation}
In (\ref{fb}), the case $c\neq 0$ is studied in \cite{RS2}.

Thus in this paper we consider the case $c=0$ in (\ref{fb}). Therefore, in this paper we study dynamical systems of the following function
\begin{equation}\label{fe}
f(x)=\frac{ax}{x^2+a}, \ \ a\neq 0, \ \ a\in C_p.
\end{equation}
where  $x\neq \hat x_1, \hat x_2 =\pm\sqrt{-a}$.

It is easy to see that function (\ref{fe}) has unique fixed point $x_0=0$.
For (\ref{fe}) we have
$$f'(x_0)=f'(0)=1,$$
i.e.,  the point $x_0$ is an indifferent point for (\ref{fe}).

It follows from (\ref{fa}) that
\begin{equation}
|f(x)-x_0|_p=|f(x)|_p=
|x|_p\cdot \dfrac{|a|_p}{|x^{2}+a|_p}.
\end{equation}
Letting $A=|a|_p$, we have\\
$$|f(x)|_p=|x|_p\cdot\dfrac{A}{|x^{2}+a|_p}.$$
Denote:
\begin{equation}
\mathcal P= \{x\in \mathbb{C}_p:\exists{n}\in\mathbb{N}\cup\{0\}, f^{n}(x)\in\{\hat x_1, \hat x_2\}\}.
\end{equation}

 Let the function $\varphi_{A}:[0,+\infty)\rightarrow[0,+\infty)$ be defined by:
$$\varphi_A(r)=\left\{\begin{array}{ccc}
\ r, \ \ \ \ \  {\rm if }\ \  r < \sqrt{A}\\[2mm]
A^{*} , \ \ \ {\rm if} \ \ r=\sqrt{A}\\[2mm]
\dfrac{A}{r}, \  \  \ \ {\rm if} \ \ r> \sqrt{A}
\end{array} \right.,$$
where $A^*$ is a positive number such that $A^*\geq{\sqrt{A}}$.

\begin{lemma}\label{1}
If $x\in S_r (x_0)$, then the following holds for the function (\ref{fe}):
$$ | f^{n}(x)|_p = \varphi_A^{n} (r).$$
\end{lemma}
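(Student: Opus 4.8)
The plan is to prove the statement by induction on $n$, with essentially all of the content lying in the base case $n=1$, where one estimates $|f(x)|_p$ directly from the given identity $|f(x)|_p = |x|_p\cdot A/|x^{2}+a|_p$ together with the strong triangle inequality. Writing $r=|x|_p$ (which equals $|x-x_0|_p$ since $x_0=0$), the behaviour of the denominator $|x^{2}+a|_p$ splits according to how $|x^{2}|_p = r^{2}$ compares with $|a|_p=A$.

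First I would treat the two generic cases. If $r<\sqrt{A}$ then $|x^{2}|_p=r^{2}<A$, so property (3.1) gives $|x^{2}+a|_p=A$ and hence $|f(x)|_p=r=\varphi_A(r)$. If $r>\sqrt{A}$ then $|x^{2}|_p=r^{2}>A$, so (3.1) gives $|x^{2}+a|_p=r^{2}$ and hence $|f(x)|_p=A/r=\varphi_A(r)$. In the borderline case $r=\sqrt{A}$ we have $|x^{2}|_p=A=|a|_p$, so only (3.2) applies and yields $|x^{2}+a|_p\le A$; provided $x\notin\{\hat x_1,\hat x_2\}$ (so that $x^{2}+a\neq 0$ and $f(x)$ is defined) this forces $|f(x)|_p=\sqrt{A}\cdot A/|x^{2}+a|_p\ge\sqrt{A}$, which is exactly the value $A^{*}$ recorded in the definition of $\varphi_A$. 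This establishes $|f(x)|_p=\varphi_A(r)$ for all admissible $x$.

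For the inductive step, assuming $|f^{n}(x)|_p=\varphi_A^{n}(r)$, I would apply the base case to the point $y=f^{n}(x)$, which lies on the sphere $S_{\rho}(x_0)$ with $\rho=\varphi_A^{n}(r)$: then $|f^{n+1}(x)|_p=|f(y)|_p=\varphi_A(\rho)=\varphi_A^{n+1}(r)$. To make this legitimate one needs every iterate $f^{n}(x)$ to be defined, i.e.\ $x\notin\mathcal{P}$; this is automatic when $r\neq\sqrt{A}$, since then the orbit radii satisfy $\varphi_A^{n}(r)\neq\sqrt{A}$ for all $n$ (for $r<\sqrt{A}$ they remain equal to $r$, while for $r>\sqrt{A}$ they drop to $A/r<\sqrt{A}$ after one step and stay there), so no iterate can equal $\hat x_1$ or $\hat x_2$, both of which have norm $\sqrt{A}$.

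The main obstacle is the exceptional sphere $r=\sqrt{A}$, where $\varphi_A$ is not determined by $r$ alone: the output $A^{*}=\sqrt{A}\cdot A/|x^{2}+a|_p$ genuinely depends on the point $x$, and it may itself equal $\sqrt{A}$, so that the orbit can linger on $S_{\sqrt{A}}(x_0)$. I would emphasize that $A^{*}$ is to be read as the actual norm produced at each visit to this sphere, so that the identity $|f^{n}(x)|_p=\varphi_A^{n}(r)$ is best understood as the assertion that the radii obey the recursion $\rho_{n+1}=\varphi_A(\rho_n)$; once some $A^{*}>\sqrt{A}$ occurs, the next radius is $A/A^{*}<\sqrt{A}$ and the orbit is trapped below $\sqrt{A}$ thereafter, reducing to the already-treated case.
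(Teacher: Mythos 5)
Your proof is correct, and there is in fact nothing in the paper to compare it against: Lemma \ref{1} is stated with no proof at all, treated as an immediate consequence of the displayed identity $|f(x)|_p=|x|_p\cdot A/|x^2+a|_p$. Your argument (ultrametric case split of $r$ against $\sqrt{A}$ for $n=1$, then induction along the orbit) is exactly the intended computation, and all three cases check out: for $r\neq\sqrt{A}$ the norms $|x^2|_p=r^2$ and $|a|_p=A$ differ, so property (3.1) pins down the denominator, while for $r=\sqrt{A}$ only the bound $|x^2+a|_p\le A$ survives, giving $|f(x)|_p\ge\sqrt{A}$.

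Two of your observations are genuinely sharper than the paper's own formulation. First, you are right that on $S_{\sqrt{A}}(0)$ the quantity $A^{*}$ is not a constant but depends on the point visited: the paper fixes a single number $A^{*}\ge\sqrt{A}$ in the definition of $\varphi_A$ and only afterwards introduces the point-dependent $A^{*}(x)=|f(x)|_p$. Read literally, the identity $|f^{n}(x)|_p=\varphi_A^{n}(r)$ can fail when the orbit lingers on $S_{\sqrt{A}}(0)$ (say $A^{*}(x)=\sqrt{A}$ at the first visit but $A^{*}(f(x))>\sqrt{A}$ at the second), and your reading of the lemma as the recursion $\rho_{n+1}=\varphi_A(\rho_n)$, with $A^{*}$ taken to be the actual norm produced at each visit, is the correct repair; it is also the reading under which Lemma \ref{2}(3) and Theorem \ref{t1}(3) are actually applied. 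Second, the hypothesis $x\notin\mathcal P$, which the paper omits from the lemma and restores only in Theorem \ref{t1}, is needed, and your argument correctly isolates where: for $r\neq\sqrt{A}$ the orbit radii never equal $\sqrt{A}$ after the first step, so the poles $\hat x_{1,2}\in S_{\sqrt{A}}(0)$ can never be hit and the lemma holds unconditionally, whereas on $r=\sqrt{A}$ the exclusion of $\mathcal P$ is essential for every iterate to be defined.
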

We now see that the real dynamical system compiled from $\varphi_A^n$ is directly related to the $p$-adic dynamical system $f^n(x), \ n\geq{1}, \ x\in\mathbb{C}_p\setminus\mathcal P$. The following lemma gives properties to this real dynamical system.

\begin{lemma}\label{2} The function $\varphi_A$ has the following properties
\begin{itemize}
\item[1.] ${\rm Fix}(\varphi_A)=\{{r: 0\leq r < \sqrt{A}}\} \cup \{{\sqrt{A\\}{\rm: if \ } A^{*}=\sqrt{A}}\} $
\item[2.] If $r > \sqrt{A}$ then $\varphi_A^{n} (r)=\frac{A}{r}$  for\ all $n\geq 1$.
\item[3.] If $r=\sqrt{A}$ and $A^{*} > \sqrt {A}$,  then $\varphi_A^{n}(\sqrt{A})=\frac{A}{A^*}$ for all $n\geq{2}$.
\end{itemize}
\end{lemma}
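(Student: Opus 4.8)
The plan is to dispatch all three claims by a direct case analysis of the piecewise definition of $\varphi_A$, the organizing observation being that the reciprocal branch $r\mapsto A/r$ carries the region $(\sqrt A,+\infty)$ into the region $[0,\sqrt A)$, on which $\varphi_A$ acts as the identity. Once an orbit enters $[0,\sqrt A)$ it is immediately trapped at a fixed point, so every iteration sequence stabilizes after at most two steps; this single fact drives all three items.

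For item~1 I would solve $\varphi_A(r)=r$ separately on each piece. On $[0,\sqrt A)$ the function is the identity, so every such $r$ is fixed. At $r=\sqrt A$ one has $\varphi_A(\sqrt A)=A^{*}$, which equals $\sqrt A$ exactly when $A^{*}=\sqrt A$, accounting for the conditional term in the stated fixed-point set. For $r>\sqrt A$ the equation $A/r=r$ forces $r=\sqrt A$, contradicting $r>\sqrt A$, so this branch contributes no fixed points. Assembling the three cases yields precisely $\mathrm{Fix}(\varphi_A)=\{r:0\le r<\sqrt A\}\cup\{\sqrt A \text{ if } A^{*}=\sqrt A\}$.

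For item~2, take $r>\sqrt A$, so $\varphi_A(r)=A/r$. From $r>\sqrt A$ I get $A/r<A/\sqrt A=\sqrt A$, hence $\varphi_A(r)$ lies in the identity region $[0,\sqrt A)$ and is fixed by $\varphi_A$; a one-line induction then gives $\varphi_A^{n}(r)=A/r$ for all $n\ge 1$. Item~3 is the same mechanism shifted by one step: for $r=\sqrt A$ with $A^{*}>\sqrt A$ we have $\varphi_A(\sqrt A)=A^{*}>\sqrt A$, so a second application produces $\varphi_A^{2}(\sqrt A)=A/A^{*}$, and since $A^{*}>\sqrt A$ forces $A/A^{*}<\sqrt A$, this value again falls in the identity region and is fixed thereafter, whence $\varphi_A^{n}(\sqrt A)=A/A^{*}$ for all $n\ge 2$. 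There is no genuine obstacle in this lemma; the only point demanding any care is the comparison $A/r<\sqrt A$ (and its analogue $A/A^{*}<\sqrt A$), which is what guarantees that the image of the reciprocal branch lands strictly inside the identity region rather than on the boundary $r=\sqrt A$.
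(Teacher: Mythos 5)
Your proposal is correct and follows essentially the same route as the paper: solve $\varphi_A(r)=r$ piecewise for item~1, observe that $r>\sqrt A$ implies $\varphi_A(r)=A/r<\sqrt A$ lands in the identity region and is therefore fixed for item~2, and apply that same mechanism after the single step $\varphi_A(\sqrt A)=A^{*}>\sqrt A$ for item~3. The paper's proof is merely a terser version of your argument, so there is nothing to add.
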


\begin{proof}
\begin{itemize}
\item[1)] This is a simple observation of the equation $\varphi_A(r)=r.$ \\
\item [2)] If $r>\sqrt{A}$, then by definition of function $\varphi_A$, we have
\begin{equation}
\varphi_A(r)=\frac{A}{r}.
\end{equation}
Moreover, $r>\sqrt{A}, \frac{A}{r}<\sqrt{A}$. By part 1 of this lemma, $\frac{A}{r}$ is to be considered a fixed point for $\varphi_A(r)$. Furthermore, $\varphi_A^{n}=\frac{A}{r}$ for all $n\geq 1$.
\item [3.)] The proof of part 3 follows part 2 of this lemma.
\end{itemize}
\end{proof}

From this lemma it follows that
\begin{equation}\label{lim1}
\lim_{n\to\infty}\varphi^{n}\left(r\right)= \left\{\begin{array}{ll|l}
\ \ r, \ \ \ \ \ 0 \leq r < \sqrt{A} \\[2mm]
\sqrt{A}, \ \ \ \ \ r= \sqrt{A}, \ \  A^{*}=\sqrt{A}\\ [2mm]
\ \frac{A}{A^{*}}, \ \ \ \ \ r=\sqrt{A}, A^{*} > \sqrt{A} \\[2mm]
\ \frac{A}{r}, \ \ \ \ \ \ r> \sqrt{A}
\end{array}
\right.,
\end{equation}
for any $r\geq 0$.

Denote:
$$A^{*}(x)= |f(x)|_p,  \  \ {\rm if} \ \ x\in S_{\sqrt{A}}(0).$$

By the applying Lemma \ref{1}, and \ref{2}, and formula (\ref{lim1}) we get the following properties of the $p$-adic dynamical
system complied by the function (\ref{fe}).

\begin{thm}\label{t1}
The $p$-adic dynamical system is generated by the function (\ref{fe}) has the following properties:
\begin{itemize}
\item[1.]
\begin{itemize}
 \item[1.1)]$SI(x_0) = U_{\sqrt{A}}(0)$.
 \item[1.2)]$\mathcal {P} \subset S_{{\sqrt{A}}}(0)$.
 \end{itemize}
 \item[2.] If $r> \sqrt{A}$ and $x \in S_r(0)$, then
$$f^{n}(x) \in{S}_{{A\over r}}(0) {\ \rm for \ all} \ \ n \geq 1.$$
\item[3.] Let $x\in S_{\sqrt{A}} (0)\setminus\mathcal P$.
\begin{itemize}
\item[3.1)] If $A^*(x)=\sqrt{A}$, then
 $$f(x) \in S_{\sqrt{A}}(0).$$
\item[3.2)] If $A^*(x)>\sqrt{A}$, then
 $$f^{n}(x) \in S_{\frac{A}{{A}^{*}(x)}}(0), \ \ \forall \ n \geq 2.$$
\end{itemize}
\end{itemize}
\end{thm}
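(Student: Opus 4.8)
The plan is to prove all four items as corollaries of Lemma \ref{1}, Lemma \ref{2} and the limit formula (\ref{lim1}), using the dictionary they provide: for $x\in S_r(0)$ the $p$-adic norm of the orbit is governed entirely by the real iteration $|f^n(x)|_p=\varphi_A^n(r)$. So each claim about where $f^n(x)$ lands becomes a claim about $\varphi_A^n(r)$, which I read off from Lemma \ref{2}.

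First I would treat 1.1. To see that $U_{\sqrt{A}}(0)$ is a Siegel disk, fix $\rho<\sqrt{A}$ and $x\in S_\rho(0)$; then $|x^2|_p=\rho^2<A=|a|_p$, so $|x^2+a|_p=A$ by the strong triangle inequality and $|f(x)|_p=\rho$. Equivalently $\rho\in{\rm Fix}(\varphi_A)$ by Lemma \ref{2}(1), so $\varphi_A^n(\rho)=\rho$ for all $n$ and every such sphere is invariant. For maximality I would argue that a ball $U_r(0)$ with $r>\sqrt{A}$ must contain a sphere $S_\rho(0)$ with $\sqrt{A}<\rho<r$, and Lemma \ref{2}(2) gives $\varphi_A(\rho)=A/\rho<\rho$, so that sphere is not invariant; hence no larger Siegel disk exists and $SI(x_0)=U_{\sqrt{A}}(0)$. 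For 1.2, let $x\in\mathcal P$ with $f^n(x)\in\{\hat x_1,\hat x_2\}$; since $|\hat x_{1,2}|_p=|\sqrt{-a}|_p=\sqrt{A}$ this forces $\varphi_A^n(r)=\sqrt{A}$ for $x\in S_r(0)$. If $r<\sqrt{A}$ then $\varphi_A^n(r)=r<\sqrt{A}$, and if $r>\sqrt{A}$ then $\varphi_A^n(r)$ equals $r$ (for $n=0$) or $A/r<\sqrt{A}$ (for $n\geq1$, by Lemma \ref{2}(2)); both contradict $\varphi_A^n(r)=\sqrt{A}$, so $r=\sqrt{A}$ and $\mathcal P\subset S_{\sqrt{A}}(0)$.

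Parts 2 and 3 are then short. For part 2, $r>\sqrt{A}$ together with 1.2 already gives $x\notin\mathcal P$, so the orbit avoids the poles and is well-defined, and Lemma \ref{2}(2) yields $\varphi_A^n(r)=A/r$ for all $n\geq1$, i.e. $f^n(x)\in S_{A/r}(0)$. For part 3 I would use that $A^*(x)=|f(x)|_p$ is precisely the parameter $A^*$ attached to the point $x$ in the definition of $\varphi_A$. If $A^*(x)=\sqrt{A}$ then $|f(x)|_p=\sqrt{A}$ gives 3.1 at once; if $A^*(x)>\sqrt{A}$ then $f(x)\in S_{A^*(x)}(0)$ with radius exceeding $\sqrt{A}$, and applying part 2 to $f(x)$ (equivalently Lemma \ref{2}(3)) gives $f^n(x)\in S_{A/A^*(x)}(0)$ for all $n\geq2$.

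The step I expect to require the most care is the critical sphere $S_{\sqrt{A}}(0)$, which is where $\mathcal P$ lives and where the two subcases of part 3 are decided. On it $|x^2|_p=A$, so $|x^2+a|_p$ can take any value in $(0,A]$ and $A^*(x)=|f(x)|_p=A^{3/2}/|x^2+a|_p\geq\sqrt{A}$, with equality exactly when $|x^2+a|_p=A$; this is the dichotomy behind 3.1 versus 3.2, and it is consistent with the constraint $A^*\geq\sqrt{A}$ imposed on $\varphi_A$. The main obstacles are thus bookkeeping ones: excising $\mathcal P$ so that all iterates stay defined, and genuinely arguing maximality in 1.1 rather than mere invariance.
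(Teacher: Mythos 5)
Your proposal is correct and follows essentially the same route as the paper: every item is reduced via Lemma \ref{1} to the behaviour of the real map $\varphi_A$ classified in Lemma \ref{2}, with part 3.2 obtained by applying part 2 to $f(x)$, exactly as in the paper's proof. You merely fill in details the paper leaves implicit (the maximality argument for $SI(x_0)$, the $n=0$ case in 1.2, and the computation $A^*(x)=A^{3/2}/|x^2+a|_p\geq\sqrt{A}$ on the critical sphere), all of which are sound.
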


\begin{proof}
\begin{itemize}
\begin{enumerate}
\item[1.1] By lemma \ref{1} and part 1 of Lemma \ref{2}, sphere $S_r(0)$ is invariant for $f$ if and only if $r<\sqrt{A}$. Consequently, $SI(x_0)=U_{\sqrt{A}}(0)$.
\item[1.2] Note that $|\hat x_1|=|\hat x_2|={\sqrt{A}}$, i.e., $\{\hat x_1, \hat x_2\} \subset S_{\sqrt{A}}(0)$. By part 1.1, and 2 of this theorem if $x \in S_r(0), r\neq\sqrt{A}$, then $f(x) \notin S_{\sqrt{A}}(0)$. By definition of set $\mathcal{P}$, we can conclude that $\mathcal{P} \subset S_{\sqrt{A}}(0)$.
\item[2.]The proof of part 2 easily follows of Lemma \ref{1} and part 2 of Lemma \ref{2}.
\item[3.1] If $x\in S_{\sqrt{A}}(0) \setminus \mathcal {P}$ and $A^*=\sqrt{A}$, we have $|f(x)|_p={\sqrt{A}}$, i.e., $f(x) \in S_{\sqrt{A}}(0)$.
\item[3.2] If $A^* > {\sqrt{A}}$, then by part 2 of this theorem we have $f^n(x) \in S_{\frac{A}{{A^*(x)}}(0)}$, for all $n\geq 2.$
\end{enumerate}
\end{itemize}
\end{proof}

\section{Ergodicity properties of the dynamical system $f(x)=\frac{ax}{x^2+a}$ in $\mathbb {Q}_p.$}
In this section we assume that $\sqrt{-a}$ exists in $\mathbb {Q}_p$. Consider the dynamical system (\ref{fe}) in $\mathbb {Q}_p$.

Define the following set:
$$\Delta = \{{r: 0 < r< \sqrt{A}\}}.$$

From previous section we have
\begin{cor}
The sphere $S_r(0)$ is invariant for $f$ if and only if $r \in \Delta$
\end{cor}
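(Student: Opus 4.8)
The plan is to reduce the statement to the behaviour of the one-dimensional map $\varphi_A$ via Lemma \ref{1}, and then to read off the three regimes for $r$ from Theorem \ref{t1}. Recall that, by Lemma \ref{1}, for any $x\in S_r(0)\setminus\mathcal P$ one has $|f^n(x)|_p=\varphi_A^n(r)$, where at the borderline radius $r=\sqrt A$ the parameter $A^*$ entering $\varphi_A$ is the point-dependent quantity $A^*(x)=|f(x)|_p$. Hence $S_r(0)$ is invariant precisely when $\varphi_A^n(r)=r$ for every $n\ge 1$ and every $x\in S_r(0)\setminus\mathcal P$, i.e. when $r$ is a fixed point of $\varphi_A$ whose forward orbit never leaves $r$. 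This is the key translation that turns the question into an inspection of the piecewise definition of $\varphi_A$.

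For sufficiency, suppose $r\in\Delta$, so $0<r<\sqrt A$. By part 1 of Lemma \ref{2} every such $r$ lies in ${\rm Fix}(\varphi_A)$, hence $\varphi_A^n(r)=r$ for all $n$, and the sphere is invariant. Equivalently, this is exactly the assertion $SI(x_0)=U_{\sqrt A}(0)$ of Theorem \ref{t1}(1.1): by the definition of the maximal Siegel disk every sphere of radius strictly less than $\sqrt A$ is invariant.

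For necessity I would show that no $r\ge\sqrt A$ works. If $r>\sqrt A$, then by Theorem \ref{t1}(2) any $x\in S_r(0)$ satisfies $f(x)\in S_{A/r}(0)$, and since $A/r<\sqrt A<r$ we get $f(x)\notin S_r(0)$; thus $S_r(0)$ is not invariant. The remaining and most delicate case is $r=\sqrt A$, which is where the standing assumption that $\sqrt{-a}$ exists in $\mathbb Q_p$ enters. Here the poles $\hat x_1,\hat x_2=\pm\sqrt{-a}$ lie on $S_{\sqrt A}(0)$, since $|\pm\sqrt{-a}|_p=\sqrt A$, so $f$ already fails to be everywhere defined on this sphere. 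More to the point, I would exhibit genuinely escaping points: choosing $x\in S_{\sqrt A}(0)$ close enough to a pole forces $|x^2+a|_p<A$, whence $A^*(x)=|f(x)|_p=\sqrt A\cdot A/|x^2+a|_p>\sqrt A$, and Theorem \ref{t1}(3.2) then sends the orbit of such $x$ into $S_{A/A^*(x)}(0)$ with $A/A^*(x)<\sqrt A$. Consequently $S_{\sqrt A}(0)$ is not invariant, which completes the necessity and hence the corollary.

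The only real obstacle is the boundary case $r=\sqrt A$: unlike the other two regimes it is not settled by a single fixed-point computation for $\varphi_A$, because $\varphi_A(\sqrt A)=A^*$ varies with the point through $A^*(x)$. The argument must therefore produce at least one point on the sphere whose value $A^*(x)$ exceeds $\sqrt A$, and this is exactly where the hypothesis $\sqrt{-a}\in\mathbb Q_p$ is used, guaranteeing that such points (near the poles) are genuinely available inside $\mathbb Q_p$.
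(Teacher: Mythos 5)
Your proof is correct and takes essentially the same route as the paper, which gives no separate argument for the corollary but derives it (``From previous section we have'') from Lemma \ref{1}, Lemma \ref{2} and Theorem \ref{t1}, exactly the reduction to the real map $\varphi_A$ that you carry out. Your explicit handling of the boundary radius $r=\sqrt{A}$ --- noting the poles $\pm\sqrt{-a}$ lie on $S_{\sqrt{A}}(0)$ and exhibiting nearby points with $A^{*}(x)>\sqrt{A}$ that escape the sphere --- is a welcome extra detail that the paper leaves implicit, since Theorem \ref{t1}(3.1) by itself would allow $S_{\sqrt{A}}(0)$ to be invariant if every point had $A^{*}(x)=\sqrt{A}$.
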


In this section we study ergodicity of dynamical system on each invariant sphere.

\begin{lemma}\label{3}
For every closed ball $V_\rho(c) \subset S_r(0), r\in \Delta$, the following is sufficient to say\\
\begin{equation}
f((V_\rho(c)) = V_\rho(f(c)).
\end{equation}
\end{lemma}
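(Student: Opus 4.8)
The plan is to prove that $f$ restricted to the sphere $S_r(0)$ with $r\in\Delta$ is an isometry, and then to upgrade this isometry to the stated equality of balls. The starting point is the exact difference formula
$$
f(x)-f(y)=\frac{a(x-y)(a-xy)}{(x^2+a)(y^2+a)},
$$
valid whenever $x,y$ are not poles of $f$. If $x,y\in S_r(0)$ with $r\in\Delta$, then $|x|_p=|y|_p=r<\sqrt{A}$, so $|xy|_p=r^2<A$ and $|x^2|_p=|y^2|_p=r^2<A$. By the strong triangle inequality this forces $|a-xy|_p=A$ and $|x^2+a|_p=|y^2+a|_p=A$, and substituting into the formula gives
$$
|f(x)-f(y)|_p=\frac{A\cdot|x-y|_p\cdot A}{A\cdot A}=|x-y|_p,
$$
so $f$ is an isometry on $S_r(0)$.

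From this the inclusion $f(V_\rho(c))\subseteq V_\rho(f(c))$ is immediate: since $V_\rho(c)\subset S_r(0)$, every $x\in V_\rho(c)$ satisfies $|f(x)-f(c)|_p=|x-c|_p\le\rho$. Note also that $c\in S_r(0)$ and, since $r\in\Delta$ makes $S_r(0)$ invariant (the Corollary above), $f(c)\in S_r(0)$, so $V_\rho(f(c))$ is a genuine ball inside the sphere.

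The reverse inclusion, that is, surjectivity of $f\colon V_\rho(c)\to V_\rho(f(c))$, is the main obstacle. First I would observe that the hypothesis $V_\rho(c)\subset S_r(0)$ forces $\rho<r$ (otherwise $0\in V_\rho(c)\setminus S_r(0)$), whence $V_\rho(f(c))\subset S_r(0)$ as well. Now fix $w\in V_\rho(f(c))$; then $|w|_p=r\neq 0$ and the equation $f(x)=w$ becomes the quadratic $wx^2-ax+aw=0$, whose two roots have, by Vieta, product $a$ and sum $a/w$, of norms $A$ and $A/r$ respectively. A Newton polygon argument (equivalently, the fact that the middle exponent lies strictly below the segment joining the outer ones, precisely because $r<\sqrt{A}$) shows the two roots have distinct norms $r$ and $A/r$; since $r\neq A/r$, Galois conjugate roots over $\mathbb{Q}_p$ would share the same norm, so the quadratic splits over $\mathbb{Q}_p$ and both roots lie in $\mathbb{Q}_p$, with exactly one, say $\tilde x$, in $S_r(0)$. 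The isometry then gives $|\tilde x-c|_p=|f(\tilde x)-f(c)|_p=|w-f(c)|_p\le\rho$, so $\tilde x\in V_\rho(c)$ and $w=f(\tilde x)\in f(V_\rho(c))$; combining the inclusions yields the claim. I expect the delicate point to be exactly the justification that the preimage of $w$ lies in $\mathbb{Q}_p$ (not merely in $\mathbb{C}_p$) and in the correct sphere $S_r(0)$; this is where the distinctness of the two root-norms, together with the standing assumption of this section that $\sqrt{-a}\in\mathbb{Q}_p$, does the work, and the norm bookkeeping needs slight extra care at $p=2$ (one may instead invoke Hensel's lemma on the discriminant $a^2(1-4w^2/a)$, since $|4w^2/a|_p<1$).
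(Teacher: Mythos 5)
Your proposal is correct, and it is strictly more complete than the paper's own proof. For the isometry half you do exactly what the paper does: the paper's proof expands $f(x)-f(c)=\frac{a(x-c)(a-xc)}{(x^2+a)(c^2+a)}$ and uses $|xc|_p=|x^2|_p=|c^2|_p=r^2<A$ to conclude $|f(x)-f(c)|_p=|x-c|_p\leq \rho$ --- and then it simply stops, asserting the stated equality of balls although the computation only yields the inclusion $f(V_\rho(c))\subseteq V_\rho(f(c))$. The surjectivity argument is the genuinely new content in your write-up, and it is sound: your observation that $\rho<r$ (else $0\in V_\rho(c)$) forces $|w|_p=r$ for $w\in V_\rho(f(c))$; the Newton polygon of $wx^2-ax+aw$ has a vertex at the middle coefficient precisely because $r<\sqrt{A}$, so the two preimages have distinct norms $r$ and $A/r$, hence cannot be Galois conjugate and must lie in $\mathbb{Q}_p$, with exactly one on $S_r(0)$; the isometry then pulls $w$ back into $V_\rho(c)$; and your $p=2$ caveat is handled correctly since $|4w^2/a|_2\leq 1/8$ makes the discriminant a square by Hensel. (The standing assumption $\sqrt{-a}\in\mathbb{Q}_p$ is in fact not needed for this step; your Newton polygon does all the work.) It is worth knowing that in the setting of this lemma there is a cheaper route to surjectivity: Section 3 works in $\mathbb{Q}_p$, where closed balls are compact, so composing $f|_{V_\rho(c)}$ with the translation $y\mapsto y+c-f(c)$ gives an isometry of the compact ball $V_\rho(c)$ into itself, which is automatically onto. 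Your explicit root analysis costs more machinery but buys more: it does not use compactness (so it survives in $\mathbb{C}_p$, where balls are not compact), and it exhibits the second preimage on $S_{A/r}(0)$, consistent with part 2 of Theorem 1.
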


\begin{proof}
By inclusion of $V_\rho(c) \subset S_r(0)$, we have $|c|_p=r$.

Let $x\in V_\rho(c), i.e. \ |x-c|_p \leq \rho$, then
\begin{equation}\label{rr}
|f(x)-f(c)|_p=|x-c|_p \cdot  \frac{|a|_p \cdot |a-xc|_p}{(x^2+a)(c^2+a)}.
\end{equation}
$|x\cdot c|_p= r^2$, since $x\in V_\rho (c) \subset S_r(0)$. Moreover, $|a| _p=A$. If $r \in \Delta$ i.e. $r< \sqrt{A}$, then $|x^2|_p=|x\cdot c|_p = r^2 < A = |a|_p$. By the equality of (\ref{rr}), $|f(x)-f(c)|_p=|x-c|_p \leq \rho$.
\end{proof}

\begin{lemma}\label{rhor}
If $c \in S_r(0)$, $r\in \Delta$, then $|f(c)-c|_p=\frac{r^3}{A}$.
\end{lemma}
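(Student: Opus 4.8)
The plan is to compute $f(c)-c$ explicitly and then pass to $p$-adic absolute values, invoking the hypothesis $r\in\Delta$ only at the very last step to pin down the denominator.

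First I would simplify the difference algebraically. Writing $f(c)=\dfrac{ac}{c^2+a}$ and combining over a common denominator gives
$$f(c)-c=\frac{ac-c(c^2+a)}{c^2+a}=\frac{-c^3}{c^2+a},$$
so that, taking norms and using multiplicativity of $|\cdot|_p$,
$$|f(c)-c|_p=\frac{|c|_p^3}{|c^2+a|_p}=\frac{r^3}{|c^2+a|_p},$$
where $|c|_p=r$ because $c\in S_r(0)$.

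The only remaining point — and the one place where the assumption $r\in\Delta$ enters — is to evaluate $|c^2+a|_p$. Here $|c^2|_p=r^2$, and the condition $r<\sqrt{A}$ yields $r^2<A=|a|_p$, so the two summands $c^2$ and $a$ have distinct $p$-adic norms. Applying the sharpened strong triangle inequality (property 3.1, namely $|x+y|_p=\max\{|x|_p,|y|_p\}$ when $|x|_p\neq|y|_p$) gives $|c^2+a|_p=\max\{r^2,A\}=A$. Substituting this into the displayed formula produces $|f(c)-c|_p=\dfrac{r^3}{A}$, as claimed.

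I expect no genuine obstacle: the computation is short and self-contained. The only subtlety is remembering to use the non-Archimedean ``isosceles'' property (3.1) rather than the weak inequality (3.2) to compute $|c^2+a|_p$ exactly; this is legitimate precisely because the strict inequality $r<\sqrt{A}$ guarantees $|c^2|_p\neq|a|_p$, which is exactly the regime $r\in\Delta$ singles out.
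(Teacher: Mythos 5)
Your proposal is correct and follows essentially the same route as the paper: both compute $f(c)-c=\frac{-c^3}{c^2+a}$, take $p$-adic norms, and use $r^2<A$ to evaluate $|c^2+a|_p=A$. Your write-up merely makes explicit the appeal to the isosceles property (3.1), which the paper leaves implicit in the phrase ``because $r^2<A$.''
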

\begin{proof}
This follows from the following equality,
\begin{equation}
|f(c)-c|_p=|\frac{-c^3}{c^2+a}|_p=\frac{|c^3|_p}{|c^2+a|_p}=
\frac{r^3}{A}, \ {\rm because} \ r^2 < A.
\end{equation}
\end{proof}

By Lemma \ref{rhor}, we have that $|f(c)-c|_p$ relies on $r$, but does not rely on $c\in S_r(0)$, therefore we define $\rho(r)=|f(c)-c|_p$, if $c \in S_r(0)$.

The following theorems and respective proofs can be reviewed upon in the references as they are similar to the results of paper \cite{RS2}.
\begin{thm}\label{t2}
If $c \in S_r(0), r\in \Delta$ then,
\begin{itemize}
\item[1.] For any $n\geq 1$ the following equality holds $|f^{n+1}(c)-f^n(c)|_p=\rho(r)$.
\item[2.] $f(V_{\rho(r)}(c))=V_{\rho(r)}(c).$
\item[3.] If for some $\theta > 0$, the ball $V_\theta(c) \subset S_r(0)$ is an invariant for $f$, then $\theta \geq \rho(r).$
\end{itemize}
\end{thm}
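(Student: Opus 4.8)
The plan is to reduce all three parts to the two preceding lemmas together with the invariance of the sphere $S_r(0)$ for $r\in\Delta$ (the Corollary) and the elementary non-Archimedean fact that a point strictly closer to a centre than the centre's norm has the same norm. For part 1, the key observation is that since $S_r(0)$ is invariant, every iterate $c_n:=f^n(c)$ again lies on $S_r(0)$. I would fix $n\geq 1$ and apply Lemma \ref{rhor} to the point $c_n\in S_r(0)$, which immediately gives $|f(c_n)-c_n|_p=r^3/A=\rho(r)$. Since $f(c_n)=f^{n+1}(c)$ and $c_n=f^n(c)$, this is exactly the asserted equality $|f^{n+1}(c)-f^n(c)|_p=\rho(r)$, uniformly in $n$.

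For part 2, I would first verify that the ball $V_{\rho(r)}(c)$ is contained in $S_r(0)$, so that Lemma \ref{3} is applicable. Because $r<\sqrt{A}$ we have $\rho(r)=r^3/A<r$, hence for any $x\in V_{\rho(r)}(c)$ the strict inequality $|x-c|_p\leq\rho(r)<r=|c|_p$ forces $|x|_p=r$ by property 3.1 of the $p$-adic norm; thus $V_{\rho(r)}(c)\subset S_r(0)$. Now Lemma \ref{3} yields $f(V_{\rho(r)}(c))=V_{\rho(r)}(f(c))$, and it remains only to identify the two balls $V_{\rho(r)}(f(c))$ and $V_{\rho(r)}(c)$. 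By Lemma \ref{rhor} their centres satisfy $|f(c)-c|_p=\rho(r)$, so $f(c)\in V_{\rho(r)}(c)$; since two closed balls of equal radius whose centres lie within that radius coincide, we conclude $f(V_{\rho(r)}(c))=V_{\rho(r)}(c)$.

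For part 3, suppose $V_\theta(c)\subset S_r(0)$ is invariant for $f$. As $c\in V_\theta(c)$, invariance forces $f(c)\in V_\theta(c)$, that is $|f(c)-c|_p\leq\theta$; combining this with Lemma \ref{rhor} gives $\rho(r)=|f(c)-c|_p\leq\theta$, which is the desired lower bound $\theta\geq\rho(r)$.

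Since each step is a direct invocation of Lemma \ref{3} or Lemma \ref{rhor}, I do not anticipate a serious obstacle. The one point requiring genuine care is the ball-identification step in part 2: it rests entirely on the ultrametric structure rather than on any special feature of $f$, and it relies on having first checked the inclusion $V_{\rho(r)}(c)\subset S_r(0)$ so that Lemma \ref{3} may legitimately be applied.
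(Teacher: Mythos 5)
Your proof is correct. Note that the paper itself omits a proof of Theorem \ref{t2}, deferring to the analogous results in \cite{RS2}; your argument is exactly the expected one: parts 1 and 3 are immediate from Lemma \ref{rhor} together with the invariance of $S_r(0)$, and part 2 combines Lemma \ref{3} with the ultrametric facts that $\rho(r)=r^3/A<r$ forces $V_{\rho(r)}(c)\subset S_r(0)$ and that two closed balls of equal radius whose centres are within that radius coincide. Your care in checking the inclusion $V_{\rho(r)}(c)\subset S_r(0)$ before invoking Lemma \ref{3} is precisely the step that must not be skipped, and you handled it correctly.
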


For each $r \in \Delta$, let us consider a measurable space $(S_r(0),\mathcal B)$, in this case, $\mathcal B$ is the algebra generated by the closed subsets of $S_r(0)$. Each element of $\mathcal B$ is a union of some ball $V_\rho(c)$.

A measure $\bar \mu : \mathcal B \rightarrow \mathbb{R}$ is considered to be Haar Measure if it is defined by $\bar \mu(V_\rho(c)=\rho$.

Notice that $S_r(a)=V_r(a) / V_\frac{r}{p}(a)$, where $p$ is a prime.

Consider the normalized (probability) Haar measure:
$$ \mu(V_\rho(c))=\frac{\bar \mu (V_\rho(c))}{\bar \mu S_r(0))}= \frac{p \rho}{r(p-1)}. $$

By Lemma \ref{3}, we can conclude that $f$ preserves the measure $\mu$, i.e.,
$$\mu(f(V_\rho(c)))=\mu(V_\rho(c)).$$

The dynamical system,$(X, T, \mu)$, where $T: X \rightarrow X$, is a measure preserving transformation whereas $\mu$ us a probability measure. The dynamical system is \textit {ergodic}, if for every invariant set $V$ we have $\mu(V)=0$ or $\mu(V)=1$.

\begin{thm}\label{t3}
The $p$-adic dynamical system $(S_r(0), f, \mu)$ is not ergodic for all prime $p$ and all $r\in \Delta$.
\end{thm}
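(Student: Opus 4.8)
The plan is to exhibit one explicit invariant set whose measure lies strictly between $0$ and $1$, which immediately contradicts ergodicity. The natural candidate is the ball $V_{\rho(r)}(c)$ attached to an arbitrary point $c\in S_r(0)$. Indeed, part 2 of Theorem \ref{t2} already gives $f(V_{\rho(r)}(c))=V_{\rho(r)}(c)$, so this ball is invariant, and it is a measurable element of $\mathcal B$ by construction. Thus the whole problem reduces to the single estimate $0<\mu(V_{\rho(r)}(c))<1$; no further dynamical input beyond Theorems \ref{t2} and Lemma \ref{rhor} is needed.

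First I would compute the measure in closed form. Combining $\rho(r)=r^3/A$ from Lemma \ref{rhor} with the normalized Haar measure $\mu(V_\rho(c))=\frac{p\rho}{r(p-1)}$ yields
$$\mu(V_{\rho(r)}(c))=\frac{p\,\rho(r)}{r(p-1)}=\frac{p\,r^2}{A(p-1)}.$$
Positivity is immediate since $r>0$, so everything hinges on the strict upper bound.

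The main point — and the step where the standing assumption that $\sqrt{-a}$ exists in $\mathbb Q_p$ is genuinely used — is proving $\mu(V_{\rho(r)}(c))<1$. I would pass to valuations: write $r=p^{-k}$ and $A=|a|_p=p^{-m}$ with $m=\mathrm{ord}_p(a)$. The existence of $\sqrt{-a}$ in $\mathbb Q_p$ forces $\mathrm{ord}_p(-a)=\mathrm{ord}_p(a)=m$ to be \emph{even}, since the valuation of a square is even. The invariance hypothesis $r\in\Delta$, i.e. $r<\sqrt{A}$, gives $2k>m$; because $2k$ and $m$ are both even this upgrades to $2k\geq m+2$. Substituting gives
$$\mu(V_{\rho(r)}(c))=\frac{p^{\,m-2k+1}}{p-1}\leq\frac{p^{-1}}{p-1}=\frac{1}{p(p-1)}<1.$$
Hence $V_{\rho(r)}(c)$ is a nontrivial invariant set, and $(S_r(0),f,\mu)$ is not ergodic.

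The delicate case, and the one I expect to be the real obstacle, is $p=2$: there $p-1=1$, so without the parity observation one could only conclude $\mu(V_{\rho(r)}(c))\leq 1$, and equality $\mu=1$ would occur exactly when $m=2k-1$ is odd — precisely the configuration in which $S_r(0)$ is itself a single ball of radius $\rho(r)$ and the radius-$\rho(r)$ balls give no proper partition. The evenness of $\mathrm{ord}_p(a)$ excludes this, so the parity input (not the routine measure computation) is the crux. Equivalently, one may phrase the whole argument combinatorially: $S_r(0)$ is a disjoint union of $N=(p-1)\,p^{\,2k-m-1}\geq 2$ invariant balls of radius $\rho(r)$, each of measure $1/N$, so any single one of them witnesses non-ergodicity.
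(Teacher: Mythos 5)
Your proposal is correct and follows essentially the same route as the paper: both exhibit the invariant ball $V_{\rho(r)}(c)$ (invariant by part 2 of Theorem \ref{t2}), compute $\mu(V_{\rho(r)}(c))=\frac{pr^2}{A(p-1)}$, and bound this by $\frac{1}{p(p-1)}<1$. Your only addition is to justify explicitly, via valuations and the evenness of $\mathrm{ord}_p(a)$ (forced by $\sqrt{-a}\in\mathbb{Q}_p$), the step the paper states tersely as ``$r<\sqrt{A}$, i.e., $pr\leq\sqrt{A}$'' --- a worthwhile clarification, especially for $p=2$, but not a different proof.
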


\begin{proof}
Consider $(S_r(0), f, \mu)$ a dynamical system where $\mu$-normalizer Haar measure.
Note that $V_{\rho(r)}(c) \subset S_r(0)$ is a minimal invariant ball.
By Lemma \ref{rhor}, we have the following:\\
$$ \mu(V_{\rho(r)}(c))=\frac{p \rho(r)}{r (p-1)}= \frac{p {\frac{ r^3}{A}}}{r (p-1)}= \frac {p r^2}{A \ (p-1)}.$$
If $r\in \Delta$, then $r<\sqrt{A}$, i.e., $pr\leq\sqrt{A}$.
By this inequality, we have $$\mu(V_{\rho(r)}(c))\leq\frac{1}{p(p-1)} < 1,$$ for all prime $p$.
\end{proof}

\section{2-Periodic Points}
In this section, we will be interested in finding periodic points of function (\ref{fe}).
\begin{pro}\label{p1}
If function (\ref{fe}) has $k$-periodic points $\{y_0\to y_1\to . . . \to y_k\to y_0\}$, then $$|y_0|_p=|y_1|_p=...=|y_k|_p\leq\sqrt{A}.$$
\end{pro}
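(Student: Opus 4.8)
The plan is to work entirely with the $p$-adic norms $r_i := |y_i|_p$ of the orbit points and to exploit the recursion that $f$ induces on these norms. Writing $y_{i+1}=f(y_i)=\frac{a y_i}{y_i^2+a}$ and using multiplicativity of the norm, I would record
$$r_{i+1}=r_i\cdot\frac{A}{\,|y_i^2+a|_p\,},$$
so that everything reduces to evaluating $|y_i^2+a|_p$ via the strong triangle inequality. This is exactly the mechanism behind Lemma \ref{1}: by that lemma $r_i=\varphi_A^{\,i}(r_0)$, and the cyclic condition $y_k=y_0$ forces $\varphi_A^{\,k}(r_0)=r_0$, so $r_0$ is a periodic point of the real map $\varphi_A$.

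The next step is the case analysis according to how $r_i$ compares with $\sqrt{A}$, which I would carry out directly from the strong triangle inequality (equivalently, reading it off from Lemma \ref{2}). If $r_i<\sqrt{A}$ then $|y_i^2|_p=r_i^2<A=|a|_p$, hence $|y_i^2+a|_p=A$ and $r_{i+1}=r_i$: the norm is frozen. If $r_i>\sqrt{A}$ then $|y_i^2|_p=r_i^2>A$, hence $|y_i^2+a|_p=r_i^2$ and $r_{i+1}=A/r_i<\sqrt{A}$: the orbit drops strictly below $\sqrt{A}$ in one step. If $r_i=\sqrt{A}$ then only $|y_i^2+a|_p\le A$ is guaranteed, so one can assert merely $r_{i+1}\ge\sqrt{A}$.

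With these three transition rules in hand, periodicity does the rest. I would first rule out any $r_j>\sqrt{A}$: such a value maps to $A/r_j<\sqrt{A}$ and then stays frozen at that value for every subsequent iterate, so the orbit could never climb back up to $r_j$, contradicting $r_{j+k}=r_j$. Hence every $r_i\le\sqrt{A}$, which is the stated bound. For equality of the norms I would combine this global bound with the transition rules: a value $<\sqrt{A}$ is preserved, while a value $=\sqrt{A}$ maps to something that is both $\ge\sqrt{A}$ and, by the global bound just proved, $\le\sqrt{A}$, hence again $=\sqrt{A}$. Thus the sequence $r_0,r_1,\dots$ is constant in either regime, which yields $|y_0|_p=|y_1|_p=\cdots=|y_k|_p$.

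I expect the only delicate point to be the boundary case $r_i=\sqrt{A}$, where the strong triangle inequality is not sharp and the norm may a priori \emph{increase} under one step of $f$ (this is the phenomenon encoded by $A^{*}>\sqrt{A}$ in Lemma \ref{2}). The resolution is that the previously established upper bound $r_i\le\sqrt{A}$ blocks any genuine increase and forces $r_{i+1}=\sqrt{A}$ exactly; without that prior global bound one cannot conclude equality from the local estimate alone.
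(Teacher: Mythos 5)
Your proposal is correct and takes essentially the same route as the paper's proof: both rest on the norm-transition rules encoded in Lemmas \ref{1} and \ref{2} (equivalently, part 2 of Theorem \ref{t1}), first ruling out any $|y_j|_p>\sqrt{A}$ by the drop-and-freeze contradiction with periodicity, then deducing constancy of the norms separately in the regimes $r<\sqrt{A}$ and $r=\sqrt{A}$. The only cosmetic difference is at the boundary case $r_i=\sqrt{A}$, where you exclude a drop below $\sqrt{A}$ by the direct ultrametric estimate $|y_i^2+a|_p\le A$ (hence $r_{i+1}\ge\sqrt{A}$), while the paper excludes it by invariance of the smaller sphere combined with periodicity --- the same mechanism in different clothing.
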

\begin{proof}
Let function (\ref{fe}) have $k$-periodic points $\{y_0\to y_1\to . . . \to y_k\to y_0\}$.

Assume that $|y_i|_p>\sqrt{A}$ for some $i\in\{0,1,...,k\}$. Then by part 2 of Theorem \ref{t1} we have $y_{i+1}=f(y_i)\in U_{\sqrt{A}}(0)$, i.e., $|y_{i+1}|_p=r<\sqrt{A}$ and $S_r(0)$ is invariant sphere of function (\ref{fe}). From this $$|y_{i+1}|_p=|y_{i+2}|_p=...=|y_{k}|_p=|y_{0}|_p=...=|y_{i}|_p<\sqrt{A}.$$
This contradicts our assumption. Consequently $|y_i|_p\leq\sqrt{A}$ for any $i\in\{0,1,...,k\}$.

If $|y_{i}|_p=r<\sqrt{A}$, then $|y_{0}|_p=|y_{1}|_p=...=|y_{k}|_p=r<\sqrt{A}$, because $S_r(0)$ is invariant for all $r<\sqrt{A}$.

If $|y_{i}|_p=r=\sqrt{A}$, then by the above-mentioned results $|y_{i+1}|_p\ngtr\sqrt{A}$ and $|y_{i+1}|_p\nless\sqrt{A}$. Consequently, $|y_{0}|_p=|y_{1}|_p=...=|y_{k}|_p=\sqrt{A}$.
\end{proof}

Let us consider 2-periodic points, i.e. consider the equation
\begin{equation}
g(x)\equiv f(f(x))= \frac{ax^3+a^2x}{x^4+3ax^2+a^2}=x.
\end{equation}
This equation is equivalent to $x^2+2a=0$, hence two solutions, $t_{1,2}= \pm \sqrt{-2a}.$
For these points we have $$|t_1|_p=|t_2|_p=\left\{\begin{array}{ll}
\sqrt{A}, \ \ \ \ \ {\rm if} \ p\neq 2 \\[2mm]
\sqrt{\frac{A}{2}}, \ \ \ \ \ {\rm if}  \ p=2.
\end{array}
\right.$$

It is a coincidence that $g'(t_1)=g'(t_2)=9$, i.e. the value does not rely on the parameter $a$. Therefore we have
$$ |g'(t_1)|_p=|g'(t_2)|_p =
\left\{\begin{array}{ll}
1, \ \ \ \ \ {\rm if} \ p\neq 3 \\[2mm]
\frac{1}{9}, \ \ \ \ \ {\rm if}  \ p=3.
\end{array}
\right. $$

Note that the function $g$ is defined in $\mathbb {C}_p \setminus\left\{\hat x_{1,2}= \pm \sqrt{-a}, \hat{\hat{x}}_{1,2,3,4}=\pm{\sqrt{{(-3\pm\sqrt{5})a}\over 2}}\right\}$.\\

Let us define the following:
$$ \mathcal {P}_2= \{x \in \mathbb{ C}_p : \exists n \in \mathbb{N}, {\rm such \  that \ } f^{n} (x) \in \{\hat x_{1,2}, \, \hat{\hat x}_{1,2,3,4}\}\}. $$

\subsection{Case $p=2$.}
In this case we have $|t_1|_2=|t_2|_2=\sqrt{A\over 2}<\sqrt{A}$ and $|t_2-t_1|_2={\sqrt{A}\over{2\sqrt{2}}}$. By Lemma \ref{rhor} and part 2 of Theorem \ref{t2} we have $$\rho\left(\sqrt{A\over 2}\right)=|f(t_1)-t_1|_2={{\sqrt{A}}\over{2\sqrt{2}}} \ \ \mbox{and} \ \ f(V_{{\sqrt{A}}\over{2\sqrt{2}}}(t_1))=V_{{\sqrt{A}}\over{2\sqrt{2}}}(t_1).$$

In this case each fixed point $t_1, t_2$ of $g$ is an indifferent point
 and is the center of a Siegel disk.

\begin{thm}\label{tg1} If $p=2$ then $f(S_r(t_1)\setminus \mathcal P_2)\subseteq S_r(t_2)$,   $f(S_r(t_2)\setminus \mathcal P_2)\subseteq S_r(t_1)$, for any $0<r\leq{{\sqrt{A}}\over{2\sqrt{2}}}$.
\end{thm}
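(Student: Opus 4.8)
The plan is to exploit the fact that the two $2$-periodic points are interchanged by $f$, so that each required inclusion reduces to a single norm computation. First I would record the elementary identity, using $t_1^2=-2a$,
$$f(t_1)=\frac{a t_1}{t_1^2+a}=\frac{a t_1}{-2a+a}=-t_1=t_2,$$
and symmetrically $f(t_2)=t_1$. Hence proving $f(S_r(t_1)\setminus\mathcal P_2)\subseteq S_r(t_2)$ is the same as showing that for every $x\in S_r(t_1)\setminus\mathcal P_2$ one has $|f(x)-f(t_1)|_2=r$, and the second inclusion is the identical statement with the roles of $t_1$ and $t_2$ reversed. Removing $\mathcal P_2$ guarantees that $x$ and its relevant images avoid the poles $\hat x_{1,2},\hat{\hat x}_{1,2,3,4}$, so $f$ is defined throughout.

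Next I would locate $x$ in norm. Since $|t_1|_2=\sqrt{A/2}$ and the hypothesis $0<r\le \tfrac{\sqrt A}{2\sqrt2}=\tfrac12|t_1|_2$ gives $r<|t_1|_2$, the strong triangle inequality (property (3.1)) yields $|x|_2=|t_1|_2=\sqrt{A/2}$. I would then invoke the difference identity established in the proof of Lemma \ref{3},
$$|f(x)-f(t_1)|_2=|x-t_1|_2\cdot\frac{|a|_2\,|a-x t_1|_2}{|x^2+a|_2\,|t_1^2+a|_2},$$
and show that the fraction on the right equals $1$. Indeed $|x^2|_2=|x t_1|_2=A/2<A=|a|_2$, so by property (3.1) one has $|x^2+a|_2=|a-x t_1|_2=A$; moreover $t_1^2+a=-a$ gives $|t_1^2+a|_2=A$. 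Substituting these together with $|a|_2=A$, every factor of $A$ cancels and the fraction reduces to $1$.

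Consequently $|f(x)-t_2|_2=|x-t_1|_2=r$, i.e. $f(x)\in S_r(t_2)$, which proves the first inclusion; replacing $t_1$ by $t_2$ (and using $t_2^2+a=-a$, $f(t_2)=t_1$) gives the second inclusion by the same computation. I do not expect a genuine obstacle here: the entire content lies in checking that all the $2$-adic absolute values line up at $A$ so that the multiplier in the difference formula is exactly $1$. The only point deserving care is that these cancellations rest on the strict inequality $A/2<A$, which is precisely what the bound $r\le\tfrac{\sqrt A}{2\sqrt2}$ secures, since it forces $|x|_2=\sqrt{A/2}$ rather than $\sqrt A$.
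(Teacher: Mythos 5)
Your proof is correct and follows essentially the same route as the paper: both rest on the identity $f(x)-f(c)=\frac{a(x-c)(a-xc)}{(x^2+a)(c^2+a)}$ with $c=t_1,t_2$ (the paper's display (\ref{ee}) is exactly this with the denominator pre-factored via $\sqrt{-a}(\sqrt{2}\pm 1)$), together with ultrametric bookkeeping showing the multiplier has norm $1$. Your variant of evaluating $|x^2+a|_2=|a-xt_1|_2=A$ directly from $|x|_2=|t_1|_2=\sqrt{A/2}$ is a harmless repackaging of the paper's factor-by-factor estimate, so there is nothing substantive to add.
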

\begin{proof} We shall use the following equalities:
$$f(t_1)=t_2, \ \ f(t_2)=t_1.$$
Let $t_1=\sqrt{-2a}$ and $x\in S_r(t_1)\setminus\mathcal P_2\subset V_{{\sqrt{A}}\over{2\sqrt{2}}}(t_1)$, i.e., $|x-t_1|_2=r\leq{{\sqrt{A}}\over{2\sqrt{2}}}$. We have
\begin{equation}\label{ee}
|f(x)-t_2|_2=|f(x)-f(t_1)|_2=
r\cdot\left|{{-3a+\sqrt{-2a}(x-t_1)}\over{[x-t_1+\sqrt{-a}(\sqrt{2}-1)][x-t_1+\sqrt{-a}(\sqrt{2}+1)]}}\right|_2.
\end{equation}
In RHS of equality (\ref{ee}) we have $|-3a|_2=A$, $|\sqrt{-2a}(x-t_1)|_2=r\sqrt{A\over 2}<A$ and $|\sqrt{2}-1|_2=|\sqrt{2}+1|_2=1$. So $|f(x)-t_2|_2=|f(x)-f(t_1)|_2=r$, i.e., $f(x)\in S_r(t_2)$.

If $x\in S_r(t_2)\setminus\mathcal P_2\subset V_{{\sqrt{A}}\over{2\sqrt{2}}}(t_1)$, then we have
\begin{equation}\label{ed}
|f(x)-t_1|_2=|f(x)-f(t_2)|_2=
r\cdot\left|{{-3a-\sqrt{-2a}(x-t_2)}\over{[x-t_2-\sqrt{-a}(\sqrt{2}-1)][x-t_2-\sqrt{-a}(\sqrt{2}+1)]}}\right|_2.
\end{equation}
Consequently, $|f(x)-t_1|_2=|f(x)-f(t_2)|_2=r$, i.e., $f(x)\in S_r(t_1)$.
\end{proof}

\subsection{Case $p=3$.}
In this case $|t_1|_3=|t_2|_3=|t_2-t_1|_3=\sqrt{A}$ and each fixed point $t_1, t_2$ of $g$ is an attractive point of $g$.

\begin{thm}\label{tg2} If $p=3$ and $r<\sqrt{A}$, then
\begin{itemize}
\item[a)] For any $x\in S_r(t_1)\setminus \mathcal P_2$,
$\lim_{n\to \infty}f^{2n}(x)=t_1$ and $\lim_{n\to \infty}f^{2n+1}(x)=t_2$.
\item[b)] For any $x\in S_r(t_2)\setminus \mathcal P_2$,
$\lim_{n\to \infty}f^{2n}(x)=t_2$ and $\lim_{n\to \infty}f^{2n+1}(x)=t_1$.
\end{itemize}
\end{thm}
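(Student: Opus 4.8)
The plan is to prove part (a) directly and deduce part (b) from the oddness of $f$. Since $f(x)=\dfrac{ax}{x^2+a}$ satisfies $f(-x)=-f(x)$, every iterate $f^{n}$ is odd, so $f^{n}(-x)=-f^{n}(x)$ for all $n\ge 1$. Because $t_2=-t_1$, the substitution $x\mapsto -x$ carries $S_r(t_2)$ onto $S_r(t_1)$, and if $f^{2n}(-x)\to t_1$, $f^{2n+1}(-x)\to t_2$ then $f^{2n}(x)=-f^{2n}(-x)\to -t_1=t_2$ and $f^{2n+1}(x)=-f^{2n+1}(-x)\to -t_2=t_1$, which is exactly (b). Thus it suffices to establish that $t_1$ is an attracting fixed point of $g=f\circ f$ whose basin contains the whole punctured sphere $S_r(t_1)\setminus\mathcal P_2$ for every admissible $r$.

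The first step is to record the exact factorizations. Using $t_1^2+a=t_2^2+a=-a$ one computes
$$f(z)-t_2=-\frac{(z-t_1)(a-t_1z)}{z^2+a},\qquad f(w)-t_1=-\frac{(w-t_2)(a-t_2w)}{w^2+a}.$$
Next I would describe these factors on a sphere $S_r(t_1)$ with $r<\sqrt A$. Since $r<\sqrt A=|t_1|_3$, the strong triangle inequality gives $|z|_3=\sqrt A$; writing $z^2+a=(z-t_1)(z+t_1)-a$ with $|(z-t_1)(z+t_1)|_3=r\sqrt A<A$ shows $|z^2+a|_3=A$ (recall $t_1^2+a=-a\neq0$, so $t_1$ is not a pole). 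The decisive point, and the place where $p=3$ enters, is $a-t_1z=3a-t_1(z-t_1)$: here $|3a|_3=A/3$ while $|t_1(z-t_1)|_3=\sqrt A\,r$, so $|a-t_1z|_3\le A/3$. Substituting into the factorization yields the one-step bound
$$|f(z)-t_2|_3\le\tfrac13\,|z-t_1|_3,$$
together with its mirror image $|f(w)-t_1|_3\le\tfrac13|w-t_2|_3$ for $w\in S_s(t_2)$, $s<\sqrt A$.

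Composing the two one-step bounds along the orbit (which stays off the poles of $f$ and of $g$ because $x\notin\mathcal P_2$ forces $f^{n}(x)\notin\mathcal P_2$) gives $|g(z)-t_1|_3\le\tfrac19|z-t_1|_3$, consistent with $|g'(t_1)|_3=|9|_3=\tfrac19$. By induction $|f^{2n}(x)-t_1|_3\le r/9^{\,n}\to0$, which is the assertion $f^{2n}(x)\to t_1$. For the odd iterates I would either invoke continuity of $f$ at $t_1$ or reuse the one-step identity with $z=f^{2n}(x)$: as $|z-t_1|_3\to0$ while $|a-t_1z|_3\le A/3$ stays bounded, $|f^{2n+1}(x)-t_2|_3=|f(z)-t_2|_3\to0$, i.e. $f^{2n+1}(x)\to t_2$.

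The one delicate point I expect is the outermost admissible sphere. In $\mathbb{Q}_3$ the norm takes only powers of $3$, so $r<\sqrt A$ in fact forces $r\le\sqrt A/3$; on the borderline sphere $r=\sqrt A/3$ the two terms of $a-t_1z=3a-t_1(z-t_1)$ have equal norm $A/3$ and may partially cancel, so there one only has the inequality $|a-t_1z|_3\le A/3$ rather than the equality valid for $r<\sqrt A/3$. This is harmless: the inequality still yields $|f(z)-t_2|_3\le r/3\le\sqrt A/9$, so a single application of $f$ drops the radius strictly below $\sqrt A/3$, where $|t_1(z-t_1)|_3<|3a|_3$ with room to spare and the estimate sharpens to the exact equality $|f(z)-t_2|_3=\tfrac13|z-t_1|_3$. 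Hence geometric decay by the factor $\tfrac19$ per application of $g$ is guaranteed from the second iterate onward, which is all that convergence requires. I expect this borderline bookkeeping, together with the verification that $x\notin\mathcal P_2$ keeps the entire forward orbit away from the poles, to be the only subtle part of the argument.
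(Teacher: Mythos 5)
Your factorization $f(z)-t_2=-\frac{(z-t_1)(a-t_1z)}{z^2+a}$ is correct (it is equivalent to the identity (\ref{ee}) the paper works with), your denominator estimate $|z^2+a|_3=A$ is right, and the reduction of part (b) to part (a) via oddness of $f$ is a clean shortcut the paper does not use (the paper simply runs the symmetric computation). But there is a genuine gap at the decisive step: from $a-t_1z=3a-t_1(z-t_1)$ the strong triangle inequality gives only $|a-t_1z|_3\le\max\{A/3,\ r\sqrt A\}$, and for $\sqrt A/3<r<\sqrt A$ one has the \emph{equality} $|a-t_1z|_3=r\sqrt A>A/3$, whence $|f(z)-t_2|_3=r^2/\sqrt A$; the per-step contraction factor is $r/\sqrt A$, which can be arbitrarily close to $1$, not $1/3$. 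Your attempted repair --- that the value group of $\mathbb{Q}_3$ is discrete, so $r<\sqrt A$ forces $r\le\sqrt A/3$ --- does not apply here: the theorem is set in $\mathbb{C}_p$ (the exceptional set $\mathcal P_2$ is defined there, and indeed $t_1=\sqrt{-2a}$ does not even lie in $\mathbb{Q}_3$, since $2$ is a quadratic non-residue mod $3$), and the value group of $\mathbb{C}_3$ is the dense set $3^{\mathbb{Q}}$, so spheres of every radius $3^q$ with $\sqrt A/3<3^q<\sqrt A$ genuinely occur.

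The gap is repairable, and the paper's proof shows how: it records the full radius dynamics $\phi(r)=r^2/\sqrt A$ for $\sqrt A/3<r<\sqrt A$, $\phi(r)\le \sqrt A/9$ for $r=\sqrt A/3$, and $\phi(r)=r/3$ for $r<\sqrt A/3$ (formula (\ref{ee1})), then observes that $0$ is an attracting fixed point of $\phi$ with basin all of $[0,\sqrt A)$. Concretely, in the outer range the normalized radii square down, $r_{n+1}/\sqrt A=(r_n/\sqrt A)^2$, so after finitely many iterations the radius drops below $\sqrt A/3$, after which your factor-$1/3$ one-step bound (factor $1/9$ for $g=f\circ f$, matching $|g'(t_1)|_3=1/9$) takes over and gives convergence. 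Everything else in your write-up --- the pole avoidance via $x\notin\mathcal P_2$, the borderline sphere $r=\sqrt A/3$, and the limit of the odd iterates by continuity --- is sound; once you add the missing case $\sqrt A/3<r<\sqrt A$ with the squaring estimate, your argument coincides with the paper's.
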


\begin{proof} Let $S_r(t_1)\subset S_{\sqrt{A}}(0)$, $r<\sqrt{A}$ and $x\in S_r(t_1)\setminus\mathcal P_2$, i.e., $|x-t_1|_3=r$. We have
$$
|f(x)-t_2|_3=|f(x)-f(t_1)|_3=
|x-t_1|_3\cdot{{|-3a+\sqrt{-2a}(x-t_1)|_3}\over{|x-t_1+\sqrt{-a}(\sqrt{2}-1)|_3|x-t_1+\sqrt{-a}(\sqrt{2}+1)|_3}}.$$
By this equality
\begin{equation}\label{ee1}
|f(x)-t_2|_3=\phi(r)=\left\{\begin{array}{lll}
{r^2\over {\sqrt{A}}}, \ \ \ \ \mbox{if} \ \ {{\sqrt{A}}\over 3}<r<{\sqrt{A}},\\[2mm]
\leq {{\sqrt{A}}\over 9}, \ \ \mbox{if} \ \ r={{\sqrt{A}}\over 3},\\[2mm]
{r\over 3}, \ \ \ \ \ \mbox{if} \ \ r<{{\sqrt{A}}\over 3}.
\end{array}\right.
\end{equation}
For $f^2(x)$ we have
$$
|f^2(x)-t_1|_3=|f^2(x)-f^2(t_1)|_3$$ $$=
|f(x)-t_2|_3\cdot{{|-3a+\sqrt{-2a}(f(x)-t_2)|_3}\over{|f(x)-t_2+\sqrt{-a}(\sqrt{2}-1)|_3|f(x)-t_2+\sqrt{-a}(\sqrt{2}+1)|_3}}=$$ $$ =\phi(\phi(r))=\left\{\begin{array}{lll}
{(\phi(r))^2\over {\sqrt{A}}}, \ \  \  \mbox{if} \ \ {{\sqrt{A}}\over 3}<\phi(r)<{\sqrt{A}},\\[2mm]
\leq {{\sqrt{A}}\over 9}, \ \ \mbox{if} \ \ \phi(r)={{\sqrt{A}}\over 3},\\[2mm]
{\phi(r)\over 3}, \ \ \ \ \ \mbox{if} \ \ \phi(r)<{{\sqrt{A}}\over 3}.
\end{array}\right.
$$
Iterating this argument we obtain the following  formulas for $x\in S_r(t_1)\setminus \mathcal P_2$:
\begin{equation}\label{ef}
    |f^{2n}(x)-t_1|_3=\phi^{2n}(r), \ \ |f^{2n+1}(x)-t_2|_3=\phi^{2n+1}(r).
\end{equation}
Thus the dynamics of the radius $r$ of the spheres is given by the function $\phi:[0,{\sqrt{A}})\to [0,{\sqrt{A}})$, which is defined in formula (\ref{ee1}). The following properties of $\phi$ are obvious:
\begin{itemize}
  \item[$1$.] The set of fixed points of $\phi(r)$ is Fix$(\phi)=\{0\}$;

   \item[$2$.] The fixed point $r=0$ is attractive with basin of attraction $[0,{\sqrt{A}})$, independently on the value $\phi({{\sqrt{A}}\over 3})\leq {{\sqrt{A}}\over 9}$.
   \end{itemize}
Using (\ref{ef}) it is easy to see that the assertion (a) and (b) follows from property $2$.
      \end{proof}

\subsection{Case $p\geq 5$.}
In this case we have $|t_1|_p=|t_2|_p=|t_2-t_1|_p=\sqrt{A}$ and each fixed point $t_1, t_2$ of $g$ is an indifferent point
 and is the center of a Siegel disk.

\begin{thm}\label{tg3} If $p\geq 5$ then $f(S_r(t_1)\setminus \mathcal P_2)\subseteq S_r(t_2)$,   $f(S_r(t_2)\setminus \mathcal P_2)\subseteq S_r(t_1)$, for any $0<r<\sqrt{A}$.
\end{thm}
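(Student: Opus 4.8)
The plan is to follow the same scheme as the proof of Theorem \ref{tg1}, since the underlying algebraic identities are identical and only the $p$-adic estimates change. I would start from the relations $f(t_1)=t_2$ and $f(t_2)=t_1$ and use the factorization already recorded in \eqref{ee}, namely
$$f(x)-t_2=f(x)-f(t_1)=(x-t_1)\cdot\frac{-3a+\sqrt{-2a}\,(x-t_1)}{[x-t_1+\sqrt{-a}(\sqrt2-1)][x-t_1+\sqrt{-a}(\sqrt2+1)]},$$
valid for $x\in S_r(t_1)$, together with its mirror \eqref{ed} for $x\in S_r(t_2)$. Taking $p$-adic norms, the factor $|x-t_1|_p=r$ comes out in front, so the whole problem reduces to showing that the remaining fraction has norm $1$ for every $x$ with $|x-t_1|_p=r<\sqrt A$.

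The \emph{decisive} point, and the reason $p\ge 5$ is treated separately from $p=2,3$, is that now both $2$ and $3$ are $p$-adic units. Hence $|3a|_p=|a|_p=A$ and $|\sqrt2|_p=|2|_p^{1/2}=1$. From $(\sqrt2-1)(\sqrt2+1)=1$ I obtain $|\sqrt2-1|_p\,|\sqrt2+1|_p=1$, and since each factor has norm at most $\max\{|\sqrt2|_p,1\}=1$, both must in fact equal $1$; consequently $|\sqrt{-a}(\sqrt2\mp1)|_p=\sqrt A$. For the numerator, $|\sqrt{-2a}(x-t_1)|_p=\sqrt A\cdot r<A=|3a|_p$, so by the strong triangle inequality $|-3a+\sqrt{-2a}(x-t_1)|_p=A$. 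For each denominator factor, $|x-t_1|_p=r<\sqrt A=|\sqrt{-a}(\sqrt2\mp1)|_p$, so again the strong triangle inequality gives each factor the norm $\sqrt A$, whence the denominator has norm $A$.

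Combining these yields $|f(x)-t_2|_p=r\cdot\frac{A}{A}=r$, i.e. $f(x)\in S_r(t_2)$; I would also note that both poles $\pm\sqrt{-a}$ of $f$ are at $p$-adic distance $\sqrt A>r$ from $t_1$, so they do not lie in $S_r(t_1)$ and $f$ is genuinely defined on $S_r(t_1)\setminus\mathcal P_2$. The inclusion $f(S_r(t_2)\setminus\mathcal P_2)\subseteq S_r(t_1)$ then follows verbatim from the mirror identity \eqref{ed}, with the roles of $t_1$ and $t_2$ interchanged. I expect no serious obstacle here: the only thing one must get right is the unit computation $|\sqrt2\pm1|_p=1$ and $|3|_p=1$, which fails precisely for $p=2$ (where $|\sqrt2|_2<1$ forces the smaller Siegel radius $r\le\sqrt A/(2\sqrt2)$) and for $p=3$ (where $|3|_3<1$ makes the cycle attracting), which is exactly what produces the trichotomy of the three cases.
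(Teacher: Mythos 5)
Your proposal is correct and follows essentially the same route as the paper: both start from the factorizations \eqref{ee} and \eqref{ed}, use $|-3a|_p=A$, $|\sqrt{-2a}(x-t_1)|_p=r\sqrt{A}<A$ and $|\sqrt{2}\pm 1|_p=1$, and conclude via the strong triangle inequality that the fraction has norm $1$, so $|f(x)-t_2|_p=r$. Your added justifications (deriving $|\sqrt{2}\pm1|_p=1$ from $(\sqrt2-1)(\sqrt2+1)=1$, and checking that the poles $\pm\sqrt{-a}$ lie at distance $\sqrt{A}>r$ from $t_1$) are details the paper asserts without proof, not a different argument.
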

\begin{proof}
Let $x\in S_r(t_1)\setminus\mathcal P_2\subset V_{\sqrt{A}}(t_1)$, i.e., $|x-t_1|_p=r<{\sqrt{A}}$.
In RHS of equality (\ref{ee}) we have $|-3a|_p=A$, $|\sqrt{-2a}(x-t_1)|_p=r\sqrt{A}<A$ and $|\sqrt{2}-1|_p=|\sqrt{2}+1|_p=1$. So $|f(x)-t_2|_p=|f(x)-f(t_1)|_p=r$, i.e., $f(x)\in S_r(t_2)$.

If $x\in S_r(t_2)\setminus\mathcal P_2\subset V_{{\sqrt{A}}\over{2\sqrt{2}}}(t_1)$, then we have
 $|f(x)-t_1|_p=|f(x)-f(t_2)|_p=r$, i.e., $f(x)\in S_r(t_1)$.

Consequently, $f(S_r(t_1)\setminus \mathcal P_2)\subseteq S_r(t_2)$ and $f(S_r(t_2)\setminus \mathcal P_2)\subseteq S_r(t_1)$, for any $0<r<\sqrt{A}$.
\end{proof}

\section*{ Acknowledgements}
The third author was supported by the National Science Foundation, grant number NSF HRD 1302873.

\end{document}